\newtheorem{theorem}{Theorem}[section]
\newtheorem{corollary}[theorem]{Corollary}
\newtheorem{proposition}[theorem]{Proposition}
\newcommand{\keywords}[1]{\par\noindent 
{\small{\bf Keywords\/}: #1}}
\newcommand{\subjclass}[1]{\par\noindent 
{\small{\bf Mathematics Subject Classification (2010)\/}: #1}}
\def\includegraphics{}
\newenvironment{bmcformat}{\baselineskip20pt\sloppy\setboolean{publ}{false}}{\baselineskip20pt\sloppy}
\begin{document}
\begin{bmcformat}


\title{Applications of fixed point theorems in the theory of invariant subspaces}
 

\author{Rafa Esp\'{\i}nola%
         \email{Rafa Esp\'{\i}nola - espinola@us.es}
       and 
         Miguel Lacruz\correspondingauthor
         \email{Miguel Lacruz\correspondingauthor - lacruz@us.es}
      }


\address{Departamento de An\'alisis Matem\'atico, Facultad de Matem\'aticas,  Universidad de Sevilla, Avenida Reina Mercedes, 41012 Seville (SPAIN)
}%

\maketitle


\date{\today}

\begin{abstract}
We survey  several applications of fixed point theorems in the theory of invariant subspaces. The general idea is that a fixed point theorem applied to a suitable map yields the existence of invariant subspaces for an operator on a Banach space.
\end{abstract}

\subjclass{47A15, 47H10}
\keywords{Invariant subspace; Fixed point}

\ifthenelse{\boolean{publ}}{\begin{multicols}{2}}{}


\section{Introduction}
\label{intro}
One of the most recalcitrant unsolved  problems in operator theory is the invariant subspace problem. The question has an easy formulation. Does every operator on an infinite dimensional, separable complex Hilbert space have a non trivial invariant subspace?

Despite  the simplicity of its statement,  this is a very difficult problem and it has generated a very large amount of literature. We refer the reader to the  expository paper of  Yadav \cite{yadav} for a detailed account of results related to the invariant subspace problem.

In this survey we discuss some applications of  fixed point theorems in the theory of invariant subspaces. The general idea is that a fixed point theorem applied to a suitable map yields the existence of invariant subspaces for an operator on a Banach space.

In Section \ref{compact} we consider the  striking theorem of Lomonosov \cite{lomonosov73} about the existence of invariant subspaces for algebras containing compact operators. The proof of this theorem is based on the Schauder fixed point theorem.

In Section \ref{local} we present a recent result of Lomonosov, Radjavi, and Troitsky \cite{LRT} about the existence of invariant subspaces for localizing algebras. The proof of this result is based on the Ky Fan fixed point theorem for multivalued maps. The idea of using fixed point theorems for multivalued maps in the search for invariant subspaces was first introduced by Androulakis \cite{androulakis}.

In Section \ref{burnside} we consider an extension of Burnside's theorem to infinite dimensional Banach spaces.  This result is  originally due to Lomonosov \cite{lomonosov91}. We present a proof of it in a special case that was obtained independently by Scott Brown  \cite{brown} and that  once again is based on the Schauder fixed point  \nolinebreak  theorem.

In Section \ref{krein} we address the existence of invariant subspaces  for operators on Krein space of indefinite product, and we present a result of Albeverio, Makarov, and Motovilov \cite{AMM} whose proof uses the Banach fixed point theorem.

The rest of this section contains some notation, a precise statement of the invariant subspace problem, and  a few historical remarks. 

Let \(E\) be an infinite dimensional, complex Banach space and let \({\mathcal B}(E)\) denote the algebra of all bounded linear operators on \(E.\)  A subspace of \(E\) is by definition a closed linear manifold in \(E.\) 

A subspace \(M \subseteq E\) is said to be {\em invariant} under an operator \(T \in {\mathcal B}(E)\) provided that \(TM \subseteq M,\) and a subspace \(M \subseteq E\) is said to be {\em invariant} under a subalgebra \({\mathcal R} \subseteq {\mathcal B}(H)\) provided that \(M\) is invariant under every \(R \in {\mathcal R}.\) A subalgebra \({\mathcal R} \subseteq {\mathcal B}(H)\) is said to be 
{\em transitive} provided that the only subspaces invariant under \({\mathcal R}\) are the trivial ones, \(M=\{0\}\) and \(M=E.\) This is equivalent to say that the subspace  \(\{Rx: R \in {\mathcal R}\}\) is dense in \(E\) for each \(x \in E \backslash \{ 0\}.\)

The {\em commutant} of a set of  operators  \({\mathcal S} \subseteq {\mathcal B}(E)\) is the subalgebra \({\mathcal S}^\prime\)  of all  operators \(R \in {\mathcal B}(E)\) such that \(SR=RS\) for all \(S \in {\mathcal S}.\) A subspace \(M \subseteq E\) is said to be {\em hyperinvariant} under an operator \(T \in {\mathcal B}(E)\) provided that \(M\) is invariant under \(\{T\}^\prime.\)

The {\em invariant subspace problem} is the question of whether every operator in \({\mathcal B}(E)\) has a non trivial  invariant subspace. This is one of the most important open problems in operator theory. 

The  origin of this question goes back to 1935, when  von Neumann proved the  unpublished   result that any compact operator on a Hilbert space has a non trivial invariant subspace. Aronszajn and Smith \cite{AS} extended this result  in 1954 to general Banach spaces. Bernstein and Robinson \cite{BeRo} used non standard analysis to prove in 1966  that every polynomially compact operator on a Hilbert space has a non trivial invariant subspace. Halmos \cite{halmos} obtained a proof of the same result using classical methods. 

Lomonosov \cite{lomonosov73} proved  in 1973 that any non scalar operator on a Banach space that commutes with a non zero compact operator has a non trivial hyperinvariant subspace. The result of Lomonosov  came into the scene like a lightning bolt in a clear sky, generalizing all the previously known results, and introducing the use of the Schauder fixed point theorem as a new technique to produce invariant subspaces.

Enflo \cite{enflo1} constructed in 1976 the first example of an operator on a Banach space without non trivial invariant subspaces. The example circulated in preprint form and it did not appear published until 1987, when it was recognized as correct work \cite{enflo2}. In the meantime, Beauzamy \cite{beauzamy} simplified the technique,  and further examples were given by Read \cite{read1,read2}.

Very recently, Argyros and Haydon \cite{AH}  constructed an example of an infinite dimensional, separable  Banach space such that every continuous operator is the sum of a compact operator and a scalar operator, so that every operator on it has a non trivial invariant subspace. 

However, after so many decades, the question about the existence of invariant subspaces  for operators on Hilbert space is still an open problem.

\section{Invariant subspaces for algebras containing compact operators}
\label{compact}
We start with a fixed point theorem that is the key for the main result in this section. The use of this result is one of the main ideas in the technique of Lomonosov. We shall denote by   \(\overline{{\rm conv}}(S)\) the closed convex hull of a subset \(S \subseteq E.\)  
\begin{proposition}\cite[Proposition 1]{PS}
\label{key}
Let \(E\) be a Banach space, let \(C \subseteq E\) be a closed convex set, and let \(\Phi: C \to E\) be a continuous mapping such that \(\Phi(C)\) is a relatively compact subset of \(C.\) Then there is a point \(x_0 \in C\) such that \(\Phi(x_0)=x_0.\)
\end{proposition}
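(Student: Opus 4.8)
The plan is to reduce the statement to the classical Schauder fixed point theorem in the form used elsewhere in the paper, namely that a continuous self-map of a nonempty compact convex subset of a Banach space has a fixed point. The difficulty is that the given set \(C\) need not be compact; only the image \(\Phi(C)\) is assumed relatively compact. So the strategy is to manufacture, inside \(C\), a genuinely compact convex set that \(\Phi\) maps into itself, and then restrict \(\Phi\) to it.

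First I would pass to the closure of the image. Set \(K = \overline{\Phi(C)}\). By hypothesis \(\Phi(C)\) is relatively compact, so \(K\) is compact; and since \(\Phi(C) \subseteq C\) with \(C\) closed, we also get \(K \subseteq C\). (Here \(C\) is tacitly nonempty, so picking any \(x \in C\) shows \(\Phi(C)\), hence \(K\), is nonempty.) Next I would form the closed convex hull \(D = \overline{{\rm conv}}(K)\). Because \(K \subseteq C\) and \(C\) is both closed and convex, we have \(D \subseteq C\). The essential point is that \(D\) is compact: this is Mazur's theorem, that in a Banach space the closed convex hull of a compact set is compact. Thus \(D\) is a nonempty compact convex subset of \(E\) contained in \(C\).

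It then remains to check that \(\Phi\) carries \(D\) into itself. For any \(x \in D \subseteq C\) one has \(\Phi(x) \in \Phi(C) \subseteq K \subseteq D\), so \(\Phi(D) \subseteq D\). Since \(\Phi\) is continuous, its restriction \(\Phi|_D : D \to D\) is a continuous self-map of a nonempty compact convex set, and the Schauder fixed point theorem yields a point \(x_0 \in D\) with \(\Phi(x_0) = x_0\). As \(D \subseteq C\), this \(x_0\) lies in \(C\), which is exactly the desired conclusion.

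The only nonroutine ingredient, and the step I expect to be the main obstacle, is the compactness of \(D = \overline{{\rm conv}}(K)\): the passage from the compactness of \(\Phi(C)\) to that of its closed convex hull genuinely uses the completeness of \(E\) and is the place where Mazur's theorem does the work. If one preferred a self-contained argument avoiding an appeal to Schauder altogether, one could instead cover the compact set \(K\) by a finite \(\varepsilon\)-net, use it to build finite-dimensional continuous approximations of \(\Phi\), apply Brouwer's theorem to each approximation, and extract a convergent subnet of the resulting approximate fixed points; but invoking the Schauder theorem directly, as above, is considerably cleaner.
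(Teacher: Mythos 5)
Your argument is correct and is essentially identical to the paper's proof: both pass to $Q=\overline{\Phi(C)}$, invoke Mazur's theorem to get a compact convex set $\overline{{\rm conv}}(Q)\subseteq C$ that $\Phi$ maps into itself, and then apply the Schauder fixed point theorem. No gaps.
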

\begin{proof} Let \(Q\) denote the closure of \(\Phi(C).\) It follows from a theorem of Mazur that \(\overline{{\rm conv}}(Q)\) is a compact, convex subset of \(E,\) and   since  \(C\) is closed and convex,  we have \(\overline{{\rm conv}}(Q) \subseteq C.\)  Since \(\Phi(C) \subseteq Q,\) we have \(\Phi(\overline{{\rm conv}}(Q)) \subseteq Q \subseteq \overline{{\rm conv}}(Q),\) and now the result follows  from the Schauder fixed point theorem.
\end{proof}
\begin{theorem}\cite[Theorem 2]{PS}
\label{main} Let \({\mathcal R} \subseteq {\mathcal B}(E)\) be a transitive algebra  and let \(K \in {\mathcal B}(E)\) be a non zero compact operator. Then there is a operator  \(R \in {\mathcal R}\) and there is a vector \(x_1 \in E\) such that \(RKx_1=x_1.\)
\end{theorem}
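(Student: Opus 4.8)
The plan is to manufacture a continuous self-map of a closed convex set whose range is relatively compact, and then to invoke Proposition~\ref{key}. First I would normalize so that $\|K\| = 1$ and choose a vector $x_0$ with $\|Kx_0\| > 1$; since $\|K\| = 1$ this forces $\|x_0\| > 1$. Let $B = \{x \in E : \|x - x_0\| \le 1\}$ be the closed unit ball centred at $x_0$. This normalization buys two things at once: the origin lies outside $B$, so any fixed point produced will automatically be nonzero; and $K(B)$ is bounded away from $0$, because for $x \in B$ one has $\|Kx\| \ge \|Kx_0\| - \|x - x_0\| \ge \|Kx_0\| - 1 > 0$. Since $K$ is compact and $B$ is bounded, $Q := \overline{K(B)}$ is a compact set with $0 \notin Q$.

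Next I would exploit transitivity together with the compactness of $Q$. For each $y \in Q$ we have $y \neq 0$, so density of $\{Ry : R \in \mathcal{R}\}$ in $E$ yields an operator $R_y \in \mathcal{R}$ with $\|R_y y - x_0\| < 1$; by continuity of $z \mapsto \|R_y z - x_0\|$, this inequality persists on an open neighbourhood $U_y$ of $y$. The sets $U_y$ cover the compact set $Q$, so I extract a finite subcover $U_1, \dots, U_n$ with associated operators $R_1, \dots, R_n \in \mathcal{R}$, each satisfying $\|R_i z - x_0\| < 1$ for $z \in U_i$. I then fix a continuous partition of unity $\lambda_1, \dots, \lambda_n \colon Q \to [0,1]$ subordinate to this cover, so that $\sum_{i} \lambda_i \equiv 1$ on $Q$ and $\lambda_i(z) > 0$ forces $z \in U_i$.

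With these data in hand I would define $\Phi \colon B \to E$ by $\Phi(x) = \sum_{i=1}^{n} \lambda_i(Kx)\, R_i K x$. The subordination is exactly what makes $\Phi$ a self-map of $B$: for fixed $x$, the point $\Phi(x)$ is a convex combination of those $R_i(Kx)$ whose indices satisfy $\lambda_i(Kx) > 0$, and each such point lies in $B$, so convexity of $B$ gives $\Phi(x) \in B$. Moreover $\Phi$ is continuous and factors through the compact set $Q$ via $K$, so $\Phi(B)$ is a relatively compact subset of $B$. Proposition~\ref{key}, applied with $C = B$, then supplies a fixed point $x_1 \in B$, which is nonzero since $0 \notin B$. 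Writing $R = \sum_{i=1}^{n} \lambda_i(Kx_1)\, R_i$, which belongs to $\mathcal{R}$ because $\mathcal{R}$ is a linear subspace, the fixed point equation $\Phi(x_1) = x_1$ reads exactly $RKx_1 = x_1$.

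The step I expect to be the main obstacle is the central construction of $\Phi$: one must convert the pointwise information coming from transitivity, namely a \emph{different} operator near each point of $Q$, into a single globally continuous map, and the partition of unity is the device that accomplishes this while keeping the image inside $B$. The twin roles of the normalization, placing $0$ outside both $B$ and $Q$, are what simultaneously guarantee that the fixed point is a genuine nonzero vector and that the scalar coefficients $\lambda_i(Kx_1)$ collapse $R_1, \dots, R_n$ into one admissible operator $R \in \mathcal{R}$.
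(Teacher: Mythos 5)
Your proposal is correct and follows essentially the same route as the paper: the same normalization and ball $B$, a finite cover of $\overline{KB}$ obtained from transitivity, a subordinate partition of unity to glue the operators $R_i$ into the continuous map $\Phi(x)=\sum_i \lambda_i(Kx)R_iKx$, and Proposition~\ref{key} to extract the nonzero fixed point. The only cosmetic differences are that the paper builds the partition of unity explicitly via $\alpha_i(y)=\max\{0,1-\|R_iy-x_0\|\}$ and verifies relative compactness of $\Phi(B)$ through Mazur's theorem on $\overline{\mathrm{conv}}\bigcup_i R_iKB$, whereas you invoke a generic subordinate partition of unity and observe that $\Phi$ factors continuously through the compact set $Q$.
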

\begin{proof}
We may assume without loss of generality that \(\|K\|=1.\) Choose an \(x_0 \in E\) such that \(\|Kx_0\| > 1,\) so that \(\|x_0\| >1.\) Consider the closed ball \(B=\{x \in E:\|x-x_0\| \leq 1\}.\) Then, for each \(R \in {\mathcal R},\) consider the open set \(G_R= \{ y \in E : \|Ry-x_0\| < 1\}.\) Since \({\mathcal R}\) is a transitive algebra, we have
\[
\bigcup_{R \in {\mathcal R}} G_R=E \backslash \{0\}.
\]
Since \(K\) is a compact operator, \(\overline{KB}\) is a compact subset of \(E,\) and since \(\|K\|=1\) and \(\|Kx_0\|>1,\) we have \(0 \notin \overline{KB}.\) Thus, the family  \(\{G_R:R \in {\mathcal R}\}\) is an open cover of  \(\overline{KB}.\) Hence, there exist finitely many operators \(R_1, \ldots , R_n \in {\mathcal R}\) such that
\[
\overline{KB} \subseteq \bigcup_{i=1}^n G_{R_i}.
\]
Next, for each \(y \in \overline{KB}\) and \(i=1, \ldots , n\) we define \(\alpha_i(y)=\max \{0, 1- \|R_iy-x_0\|\}.\) Then \(0 \leq \alpha_i(y) \leq 1,\) and for each \(y \in \overline{KB}\) there is an \(i=1, \ldots , n\) such that \(y \in G_{R_i},\) so that \(\alpha_i(y)>0.\) Thus \(\displaystyle{\sum_{i=1}^n \alpha_i(y)>0}\) for each \(y \in \overline{KB},\) and we may define
\[
\beta_i(y)= \frac{\alpha_i(y)}{\displaystyle{\sum_{j=1}^n\alpha_j(y)}}
\]
for \(i=1, \ldots , n\) and \(y \in \overline{KB}.\) Now, each \(\beta_i\) is a continuous function from \(\overline{KB}\) into \({\mathbb R}.\) Hence, we may define a continuous mapping
\(\Phi:B \to E\) by the expression
\[
\Phi(x)= \sum_{i=1}^n \beta_i(Kx)R_iKx.
\]
We claim that \(\Phi(B) \subseteq B.\) Indeed,  for each \(x \in B\) we have \(\displaystyle{\sum_{i=1}^n \beta_i(Kx)=1,}\) so that
\[
\| \Phi(x) - x_0 \|  =  \left \| \sum_{i=1}^n \beta_i(Kx)(R_iKx -x_0)\right \| \leq  \sum_{i=1}^n \beta_i(Kx) \|R_iKx-x_0\|.
\]
If \(\|R_iKx-x_0\| > 1\) then \(\alpha_i(Kx)=0\) and therefore \(\beta_i(Kx)=0.\) Hence
\[
\| \Phi(x) - x_0 \|  \leq  \sum_{i=1}^n \beta_i(Kx) =1,
\]
and this completes the proof of our  claim. Finally, each operator \(R_iK\) is compact, so that each \(R_iKB\) is relatively compact, and it follows from an earlier mentioned theorem of Mazur
that \(\displaystyle{Q=\overline{{\rm conv}}  \bigcup_{i=1}^n R_iKB }\) is compact. Since \(\Phi(B) \subseteq Q,\) the set \(\Phi(B)\) is a relatively compact subset of \(B.\) Now we apply Proposition \ref{key} to find a vector 
\(x_1 \in B\) such that \(\Phi(x_1)=x_1.\) Since \(0 \notin B,\) we have \(x_1 \neq 0.\) Then we consider the operator defined by
\[
Rx= \sum_{i=1}^n \beta_i(Kx_1)R_ix,
\]
and we conclude that \(R \in {\mathcal R}\) and \(RKx_1=x_1,\) as we wanted.
\end{proof}
\begin{corollary}\cite{lomonosov73}, \cite[Theorem 3]{PS} Every non scalar operator that commutes with a non zero compact operator has a non trivial,  hyperinvariant subspace.
\end{corollary}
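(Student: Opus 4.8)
The plan is to argue by contradiction, using Theorem \ref{main} applied to the commutant of the given operator. Let \(T \in {\mathcal B}(E)\) be a non-scalar operator and let \(K \in {\mathcal B}(E)\) be a non-zero compact operator with \(TK = KT\). A subspace is hyperinvariant under \(T\) precisely when it is invariant under the commutant \({\mathcal R} = \{T\}^\prime\), so the assertion to be proved is equivalent to saying that \({\mathcal R}\) is not transitive. Suppose, toward a contradiction, that \({\mathcal R}\) is transitive. Since \(T\) commutes with \(K\), we have \(K \in {\mathcal R}\), and \(K\) is a non-zero compact operator, so Theorem \ref{main} furnishes an operator \(R \in {\mathcal R}\) and a vector \(x_1 \neq 0\) with \(RKx_1 = x_1\).

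The key observation is that the operator \(A = RK\) commutes with \(T\). Indeed, both \(R\) and \(K\) belong to \({\mathcal R} = \{T\}^\prime\), so
\[
TA = T(RK) = (TR)K = (RT)K = R(TK) = R(KT) = (RK)T = AT.
\]
Moreover \(A\) is compact, being the product of the compact operator \(K\) with the bounded operator \(R\), and \(1\) is an eigenvalue of \(A\) because \(Ax_1 = x_1\) with \(x_1 \neq 0\). By the Riesz--Schauder theory for compact operators, the eigenspace \(N = \ker(A - I)\) associated with the non-zero eigenvalue \(1\) is finite dimensional; it is also non-zero, since it contains \(x_1\). As \(T\) commutes with \(A\), the subspace \(N\) is invariant under \(T\): if \(Ax = x\), then \(A(Tx) = T(Ax) = Tx\), whence \(Tx \in N\).

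The next step is to pass from \(N\) to a genuinely hyperinvariant subspace. The restriction \(T|_N\) is an operator on a non-zero finite dimensional complex vector space, and hence it has an eigenvalue \(\lambda\). I would then consider the eigenspace \(M = \ker(T - \lambda I)\). It is non-zero, since it contains a corresponding eigenvector lying in \(N\), and it is proper, for otherwise \(T = \lambda I\) would be scalar, contrary to hypothesis. Finally \(M\) is invariant under every \(S \in {\mathcal R}\): if \((T - \lambda I)x = 0\), then \((T - \lambda I)(Sx) = S(T - \lambda I)x = 0\), so \(Sx \in M\). Thus \(M\) is a non-trivial subspace invariant under \({\mathcal R}\), contradicting the assumed transitivity of \({\mathcal R}\) and completing the argument.

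I expect the main subtlety to lie in this last step rather than in the application of Theorem \ref{main}. It is tempting to hope that the eigenspace \(N\) of \(A\) is itself hyperinvariant under \(T\), but \(N\) is only guaranteed to be invariant under operators that commute with \(A\), and a general \(S \in \{T\}^\prime\) need not commute with \(A = RK\). The remedy is to descend to an eigenspace of \(T\) itself: because \(M = \ker(T - \lambda I)\) is defined solely in terms of \(T\), it is automatically invariant under the whole commutant \(\{T\}^\prime\), which is exactly the hyperinvariance we seek. The finite dimensionality of \(N\), supplied by the compactness of \(A\), is what guarantees that the auxiliary eigenvalue \(\lambda\) of \(T|_N\) exists.
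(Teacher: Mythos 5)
Your proof is correct and follows essentially the same route as the paper: apply Theorem \ref{main} to the commutant, use the finite-dimensional eigenspace of the compact operator $RK$ at the eigenvalue $1$ to extract an eigenvalue of $T$, and observe that the resulting eigenspace of $T$ is a non-trivial hyperinvariant subspace. Your write-up is in fact more detailed than the paper's, correctly flagging the point that one must descend from $\ker(RK-I)$ to an eigenspace of $T$ itself to obtain invariance under the whole commutant.
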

\begin{proof} Let \(T \in {\mathcal B}(E)\) be a non scalar operator and suppose that \(T\) commutes with a nonzero compact operator \(K.\) We must show that the commutant \(\{T\}^\prime\) is non transitive. Suppose, on the contrary, that  \(\{T\}^\prime\) is  transitive. We can apply Theorem \ref{main} to find an operator \(R \in \{T\}^\prime\) such that \(\lambda=1\) is an eigenvalue of the compact operator \(RK\) with associated finite dimensional eigenspace \(F= \ker (RK-I).\) Since \(T\) commutes with \(RK,\) we observe that \(T\)  maps \(F\) into itself and therefore \(T\) must have an eigenvalue. Since \(T\) is non scalar, the corresponding eigenspace \(M\) cannot be the whole \(E\), and it is invariant under \(\{T\}^\prime.\) The contradiction has arrived. 
\end{proof}

\section{Invariant subspaces for localizing algebras}
\label{local}
In this section we use the following fixed point theorem of Ky Fan \cite{fan}. Recall that if \(\Omega\) is a topological space and \(\Phi: \Omega \to {\mathcal P}(\Omega)\) is a point to set map from \(\Omega\) to the power set of \(\Omega,\) then \(\Phi\) is said to be {\em upper semicontinuous} if for every \(x_0 \in \Omega\) and every open set \(U \subseteq \Omega\) such that \(\Phi (x_0) \subseteq U\) there is a neighborhood \(V\) of \(x_0\) such that \(\Phi (x) \subseteq U\) for every \(x \in V.\) In terms of convergence of nets, this definition is equivalent to say that for every \(x \in \Omega,\)  for every net \((x_\alpha)\)  with \(x_\alpha \to x,\) and for every \(y_\alpha \in \Phi(x_\alpha)\) such that  the net \((y_\alpha)\) converges to some \(y \in \Omega,\) we have \(y \in \Phi (x).\)

\begin{theorem}[Ky Fan fixed point theorem \cite{fan}]  Let \(C\) be a compact convex subset of a locally convex space and let \(\Phi \colon C \to {\mathcal P}(C)\) be an upper semicontinuous mapping such that \(\Phi(x)\) is a non empty, closed convex set for every \(x \in C.\) Then there is an \(x_0 \in C\) such that \(x_0 \in \Phi(x_0).\)

\end{theorem}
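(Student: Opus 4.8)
The statement is the set-valued fixed point theorem of Kakutani type in a general locally convex setting, and the plan is to reduce it to Brouwer's fixed point theorem in finite dimensions by means of finite-dimensional approximate selections, and then to recover a genuine fixed point through a compactness and separation argument.

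First I would manufacture approximate fixed points. Fix a convex symmetric open neighborhood \(U\) of the origin. Since \(C\) is compact, finitely many translates \(x_1+U,\dots,x_n+U\) with \(x_i\in C\) cover \(C.\) Pick \(y_i\in\Phi(x_i),\) which is legitimate because each value of \(\Phi\) is non empty, and choose a continuous partition of unity \(\beta_1,\dots,\beta_n\) subordinate to this cover, so that \(\beta_i(x)=0\) whenever \(x\notin x_i+U\) and \(\sum_{i=1}^n\beta_i\equiv1\) on \(C.\) Define \(f_U\colon C\to C\) by \(f_U(x)=\sum_{i=1}^n\beta_i(x)\,y_i.\) Its range lies in the finite-dimensional compact convex set \(S=\mathrm{conv}\{y_1,\dots,y_n\}\subseteq C,\) and \(f_U\) restricts to a continuous self-map of \(S,\) so Brouwer's theorem yields a point \(x_U\in S\) with \(f_U(x_U)=x_U.\)

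Next I would extract the geometric content of these approximate fixed points. For each index \(i\) contributing to the sum \(x_U=\sum_{i=1}^n\beta_i(x_U)y_i\) one has \(\beta_i(x_U)>0,\) hence \(x_U\in x_i+U,\) so \(x_U\) is a convex combination of points \(y_i\in\Phi(x_i)\) whose base points \(x_i\) all lie within \(U\) of \(x_U.\) The neighborhoods \(U,\) directed by reverse inclusion, index a net \((x_U)\) in the compact set \(C,\) so after passing to a subnet I may assume \(x_U\to x_0\) for some \(x_0\in C,\) and along the same subnet the relevant base points \(x_i\) converge to \(x_0\) as well, since \(x_i-x_U\in U\) and \(U\) shrinks to the origin.

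It remains to verify \(x_0\in\Phi(x_0),\) and this is the step I expect to carry the real weight. Suppose not. Since \(\Phi(x_0)\) is a closed convex set not containing \(x_0\) in a locally convex space, the Hahn--Banach separation theorem provides a continuous linear functional \(\varphi\) and a constant \(c\) with \(\varphi(z)<c<\varphi(x_0)\) for every \(z\in\Phi(x_0).\) The open half-space \(U_0=\{z:\varphi(z)<c\}\) contains \(\Phi(x_0),\) so upper semicontinuity furnishes a neighborhood \(W\) of \(x_0\) with \(\Phi(x)\subseteq U_0\) for all \(x\in W.\) Once \(U\) is small enough and we are far enough along the subnet, every contributing base point \(x_i\) lies in \(W,\) whence \(\varphi(y_i)<c,\) and therefore \(\varphi(x_U)=\sum_{i=1}^n\beta_i(x_U)\varphi(y_i)<c.\) Passing to the limit gives \(\varphi(x_0)\le c,\) contradicting \(\varphi(x_0)>c.\) The delicate points will be the bookkeeping of the double passage to the limit—over the directed family of neighborhoods \(U\) and over the convergent subnet—and the correct invocation of upper semicontinuity through the separating functional; the finite-dimensional reduction and the separation argument are otherwise routine.
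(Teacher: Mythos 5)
The paper does not prove this theorem: it is imported from Fan's paper and used as a black box in Section~\ref{local}, so there is no in-house argument to compare yours against. Judged on its own, your proposal is a correct, self-contained proof along the classical Fan--Glicksberg lines: single-valued approximate selections built from a partition of unity subordinate to a cover by translates of a neighborhood \(U\), Brouwer's theorem applied on the finite-dimensional convex hull of the selected values, and a Hahn--Banach separation argument that converts upper semicontinuity into the statement that the approximate fixed points eventually lie in a closed half-space missing \(x_0\), which is the desired contradiction. All the essential steps check out, including the double passage to the limit that you flag: once the subnet index \(U\) is contained in a symmetric convex \(V_1\) with \(V_1+V_1\subseteq W-x_0\) and \(x_U\in x_0+V_1\), every contributing base point satisfies \(x_i\in x_U+U\subseteq W\), which is all the separation step requires. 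Two details are worth making explicit in a final write-up. First, to produce the partition of unity in a general (non-metrizable) locally convex space, take \(U\) from a base of convex balanced open neighborhoods and set \(\alpha_i(x)=\max\{0,\,1-p_U(x-x_i)\}\), where \(p_U\) is the Minkowski functional of \(U\), then normalize; this mirrors the construction in the proof of Theorem~\ref{main} and guarantees the implication \(\beta_i(x_U)>0\Rightarrow x_U\in x_i+U\) on which your second step rests. Second, the strict separation of the point \(x_0\) from the closed convex set \(\Phi(x_0)\) uses that the space is Hausdorff (so that \(\Phi(x_0)\), being closed in the compact set \(C\), is a compact convex set disjoint from \(\{x_0\}\)), and in the complex case \(\varphi\) should be taken as the real part of a continuous linear functional. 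With those points spelled out, the argument is complete.
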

A subalgebra \({\mathcal R} \subseteq {\mathcal B}(H)\) is said to be {\em strongly compact} if its unit ball is precompact in the strong operator topology. An important example of a strongly compact algebra is the commutant of a compact operator with dense range. We shall denote by \({\rm ball }(\mathcal{R})\) the unit ball of \(\mathcal{R}.\)

This notion was introduced by Lomonosov \cite{lomonosov80} as a means to prove the existence of  invariant subspaces for essentially normal operators on Hilbert spaces.
Recall that an operator \(T\) on a Hilbert space is said to be {\em essentially normal} if \(T^\ast T-TT^\ast\) is a compact operator.
Lomonosov showed that if an essentially normal operator \(T\) has the property that both its commutant \(\{T\}^\prime\) and the commutant of its adjoint \(\{T^\ast\}^\prime\) fail to be strongly compact, then  \(T\) has a nontrivial invariant subspace.

Thus, in order to solve the invariant subspace problem for essentially normal operators, it suffices to consider only operators with a strongly compact commutant. 

Lomonosov, Radjavi and Troitsky \cite{LRT} obtained a result about the existence of invariant subspaces for an operator with a strongly compact commutant under the additional assumption  that the commutant of the adjoint is a localizing algebra.

A subalgebra \({\mathcal R} \subseteq {\mathcal B}(E)\) is said to be {\em localizing} provided that there is a closed ball \(B \subseteq E\) such that \( 0 \notin B\) and such that for every sequence \((x_n)\) in \(B\) there is a subsequence \((x_{n_j})\) and a sequence of operators \((R_j)\) in \({\mathcal R}\) such that \(\|R_j\| \leq 1\) and \((R_jx_{n_j})\) converges in norm to some non zero vector. An important example of a localizing algebra is any algebra  containing a non zero compact operator.

\begin{proposition}\cite[Proof of Theorem 2.3]{LRT}
\label{translocal}
Let \({\mathcal R} \subseteq {\mathcal B}(E)\) be a transitive localizing algebra, let \(B \subseteq E\) be a closed ball as above,
and let \(T \in {\mathcal R}^\prime\) be a nonzero operator. 
Then there exists an \(r>0\) such that  for every \(x \in B\) we have \(r \,{\rm ball}({\mathcal R})(Tx) \cap B \neq \emptyset.\)
\end{proposition}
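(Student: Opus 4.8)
The plan is to argue by contradiction, playing off three ingredients against one another: the localizing property, which pulls sequences in $B$ toward a nonzero limit; the commutation relation $T \in {\mathcal R}^\prime$, which transfers this behaviour from $x$ to $Tx$; and transitivity, which lets a single operator of bounded norm carry the limit back into $B$. The crucial preliminary observation is that $T$ is injective. Indeed, $\ker T$ is a closed subspace, and for $x \in \ker T$ and $R \in {\mathcal R}$ one has $T(Rx) = RTx = 0$ since $T$ commutes with $R$; thus $\ker T$ is invariant under ${\mathcal R}$, and transitivity forces $\ker T = \{0\}$ or $\ker T = E$. Because $T \neq 0$ we get $\ker T = \{0\}$, so $Tv \neq 0$ whenever $v \neq 0$.

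Negating the conclusion, for every $r>0$ there is a point of $B$ violating the required intersection; applying this with $r = n$ for each $n \in {\mathbb N}$ yields a sequence $(x_n)$ in $B$ with
\[
n \,{\rm ball}({\mathcal R})(Tx_n) \cap B = \emptyset,
\]
that is, no $R \in {\mathcal R}$ with $\|R\| \le n$ satisfies $RTx_n \in B$. It is worth stressing that it is the large values of $r$ that carry content here: taking $r$ small is useless, since $r\,{\rm ball}({\mathcal R})(Tx)$ shrinks to a neighbourhood of $0$, which is disjoint from $B$ in any case because $0 \notin B$.

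Next I would feed $(x_n)$ into the localizing property: there are a subsequence $(x_{n_j})$ and operators $R_j \in {\mathcal R}$ with $\|R_j\| \le 1$ such that $R_j x_{n_j} \to z$ for some $z \neq 0$. Since $T$ commutes with each $R_j$ and $T$ is continuous,
\[
R_j T x_{n_j} = T R_j x_{n_j} \longrightarrow Tz,
\]
and $Tz \neq 0$ by the injectivity established above. Now transitivity enters: as $Tz \neq 0$, the set $\{S(Tz) : S \in {\mathcal R}\}$ is dense in $E$, so I may choose $S \in {\mathcal R}$ with $S(Tz)$ in the interior of $B$ (nonempty, since $B$ is a ball of positive radius). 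Then $S R_j T x_{n_j} \to S(Tz)$ forces $S R_j T x_{n_j} \in B$ for all large $j$.

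The contradiction then comes from a norm estimate. The operator $S R_j$ lies in ${\mathcal R}$ and satisfies $\|S R_j\| \le \|S\|\,\|R_j\| \le \|S\|$, a bound independent of $j$. Hence for every $j$ with $n_j \ge \|S\|$ we have $S R_j \in n_j\,{\rm ball}({\mathcal R})$, so $(S R_j)(Tx_{n_j}) \in n_j\,{\rm ball}({\mathcal R})(Tx_{n_j})$; yet this same vector lies in $B$, contradicting the emptiness of $n_j\,{\rm ball}({\mathcal R})(Tx_{n_j}) \cap B$. The single hardest point is precisely this clash between the fixed bound $\|S\|$ on the norms of the carrying operators $S R_j$ and the diverging lower bound $n_j$ that the contradiction hypothesis imposes; producing a limit operator $S$ of bounded norm whose action lands inside $B$ is exactly what the interplay of localizing (to obtain $z \neq 0$), commutation (to pass to $Tz$), injectivity (to keep $Tz \neq 0$), and transitivity (to reach the interior of $B$) is designed to supply.
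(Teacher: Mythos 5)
Your proof is correct and follows essentially the same route as the paper's: injectivity of $T$ via invariance of $\ker T$ under the transitive algebra, negation with $r=n$, the localizing property to extract $R_jx_{n_j}\to z\neq 0$, commutation to get $R_jTx_{n_j}\to Tz\neq 0$, transitivity to land $S(Tz)$ in the interior of $B$, and the clash between the uniform bound $\|SR_j\|\leq\|S\|$ and the diverging lower bound $n_j$. No gaps.
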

\begin{proof}
First, \(T\) is one to one, because \({\mathcal R}\) is transitive and \(\ker T\) is invariant under \({\mathcal R}.\)
If this is not so, then for every \(n \geq 1\) there is a vector \(x_n \in B\) such that \(\|R\| \geq n \), whenever \(R \in {\mathcal R}\) and \(RTx_n \in B\). Since \({\mathcal R}\) is localizing, there is a subsequence \((x_{n_j})\) and  a sequence \((R_j)\) in \({\mathcal R}\) such that \(\|R_j\| \leq 1\) and \((R_jx_{n_j})\) converges in norm to some nonzero vector \(x \in X\).  We have \(TR_j=R_jT\) for all \(j \geq 1\), so that \((R_jTx_{n_j})\) converges to \(Tx\) in norm. Now \(Tx \neq 0\) because \(T\) is injective and \(x \neq 0\). Since \({\mathcal R}\) is transitive, there is an operator \(R \in {\mathcal R}\) such that \(RTx \in {\rm int}\,B\). It follows that there is a \(j_0 \geq 1\) such that \(RR_jTx_{n_j} \in {\rm int}\, B\) for every \(j \geq j_0\). Since \(RR_j \in {\mathcal R}\), the choice of the sequence \((x_n)\) implies that \(\|RR_j\| \geq n_j\) for every \(j \geq j_0\), and this is a contradiction because \(\|RR_j\| \leq \|R\|\) for every \(j \geq 1\).
\end{proof}
 If \(E\) is a Banach space then \(E^\ast\) denotes its dual space.
If \({\mathcal R} \subseteq {\mathcal B}(E)\) is a subalgebra, then  \({\mathcal R}^\ast \) denotes the subalgebra of \({\mathcal B}(E^\ast)\) of the adjoints of the elements of \({\mathcal R},\)  that is \({\mathcal R}^\ast = \{R^\ast: R \in {\mathcal R}\}.\) 
\begin{theorem}\cite[Theorem 2.3]{LRT}
\label{multi}
Let \(E\) be a complex Banach space, let \({\mathcal R} \subseteq {\mathcal B}(E)\) be a strongly compact  subalgebra such   that \({\mathcal R}^\ast\) is a transitive localizing algebra and it is closed in the weak-\(\ast\) operator topology. If \(T \in {\mathcal R}^\prime\) is a non zero operator then there is an operator \(R \in {\mathcal R}\) and there is a non zero vector \(x^\ast \in E^\ast\) such that \(R^\ast T^\ast x^\ast=x^\ast.\) Moreover,  the operator  \(T^\ast\) has a non trivial invariant subspace.
\end{theorem}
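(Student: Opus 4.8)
The plan is to transpose the scheme of Theorem \ref{main} into the dual space and to replace the Schauder fixed point theorem by the Ky Fan fixed point theorem for multivalued maps. Since \(T \in {\mathcal R}^\prime\), the adjoint \(T^\ast\) commutes with every \(R^\ast \in {\mathcal R}^\ast\), so \(T^\ast \in ({\mathcal R}^\ast)^\prime\), and \(T^\ast \neq 0\) because \(T \neq 0\). I would first apply Proposition \ref{translocal} to the transitive localizing algebra \({\mathcal R}^\ast \subseteq {\mathcal B}(E^\ast)\) and to the operator \(T^\ast\), producing a closed ball \(B \subseteq E^\ast\) with \(0 \notin B\) and a radius \(r>0\) such that \(r\,{\rm ball}({\mathcal R}^\ast)(T^\ast x^\ast) \cap B \neq \emptyset\) for every \(x^\ast \in B\). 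Because \(B\) is a norm-closed ball in a dual space it is weak-\(\ast\) compact by the Banach--Alaoglu theorem, and it is of course convex, so \(B\) is a legitimate domain for the Ky Fan theorem in the space \(E^\ast\) with its weak-\(\ast\) topology.

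On \(B\) I would then define the multivalued map
\[
\Phi(x^\ast)=\{\, r R^\ast T^\ast x^\ast : R \in {\mathcal R},\ \|R^\ast\|\le 1 \,\}\cap B .
\]
By the conclusion of Proposition \ref{translocal} each \(\Phi(x^\ast)\) is nonempty, and it is convex because it is the intersection of \(B\) with the image of the convex set \({\rm ball}({\mathcal R}^\ast)\) under the affine map \(R^\ast \mapsto r R^\ast T^\ast x^\ast\). To see that the values are weak-\(\ast\) closed, hence weak-\(\ast\) compact, I would use that \({\rm ball}({\mathcal R}^\ast)\) is compact in the weak-\(\ast\) operator topology: it is precompact there because \({\mathcal R}\) is strongly compact and strong convergence of operators forces weak-\(\ast\) operator convergence of their adjoints, and it is closed because \({\mathcal R}^\ast\) is weak-\(\ast\) operator closed and the norm is lower semicontinuous in that topology. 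For fixed \(x^\ast\) the map \(R^\ast \mapsto r R^\ast T^\ast x^\ast\) is continuous from the weak-\(\ast\) operator topology into \(E^\ast\) with the weak-\(\ast\) topology, so its image, and thus \(\Phi(x^\ast)\), is weak-\(\ast\) compact.

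The crucial and most delicate step is the upper semicontinuity of \(\Phi\), which I would check through the net characterization recalled above, and this is exactly where strong compactness is indispensable. Suppose \(x_\alpha^\ast \to x^\ast\) weak-\(\ast\) in \(B\), that \(y_\alpha^\ast = r R_\alpha^\ast T^\ast x_\alpha^\ast \in \Phi(x_\alpha^\ast)\) with \(\|R_\alpha^\ast\|\le 1\), and that \(y_\alpha^\ast \to y^\ast\) weak-\(\ast\). The combination of strong compactness of \({\mathcal R}\) with weak-\(\ast\) operator closedness of \({\mathcal R}^\ast\) makes \({\rm ball}({\mathcal R})\) compact in the strong operator topology, so after passing to a subnet I may assume \(R_\alpha \to R\) strongly with \(R \in {\rm ball}({\mathcal R})\). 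The point is that mere weak operator convergence would not suffice: to pass to the limit in
\[
\langle y_\alpha^\ast, x\rangle = r\,\langle x_\alpha^\ast, T R_\alpha x\rangle, \qquad x \in E,
\]
I need \(T R_\alpha x \to T R x\) in norm, which follows from strong convergence and boundedness of \(T\), so that the pairing with the norm-bounded, weak-\(\ast\) convergent net \(x_\alpha^\ast\) converges to \(\langle x^\ast, T R x\rangle = \langle r R^\ast T^\ast x^\ast, x\rangle\). Hence \(y^\ast = r R^\ast T^\ast x^\ast\), and since \(y^\ast \in B\) we get \(y^\ast \in \Phi(x^\ast)\). The Ky Fan theorem then yields a fixed point \(x^\ast \in \Phi(x^\ast)\); writing the corresponding operator as \(R=rR_0 \in {\mathcal R}\) gives \(R^\ast T^\ast x^\ast = x^\ast\), with \(x^\ast \neq 0\) because \(0 \notin B\).

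For the existence of a nontrivial invariant subspace I would argue as in the Corollary to Theorem \ref{main}. Set \(M = \ker(R^\ast T^\ast - I)\). Since \(T^\ast\) commutes with \(R^\ast\) it commutes with \(R^\ast T^\ast\), so \(M\) is invariant under \(T^\ast\); and \(x^\ast \in M\), so \(M \neq \{0\}\). If \(R^\ast T^\ast \neq I\) then \(M\) is a proper closed subspace and we are done. The only remaining possibility is \(R^\ast T^\ast = I\), in which case \(T^\ast R^\ast\) is a bounded idempotent, so \({\rm ran}(T^\ast)\) is closed and is either a proper nonzero \(T^\ast\)-invariant subspace or else forces \(T^\ast\) to be invertible, the one borderline situation that must be disposed of separately. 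I expect the genuine difficulty of the argument to lie not in this final bookkeeping but squarely in the upper semicontinuity step, where strong compactness is precisely the hypothesis that lets one upgrade weak operator limits to strong ones and thereby close the limiting argument.
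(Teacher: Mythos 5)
Your argument for the existence of \(R\) and \(x^\ast\) with \(R^\ast T^\ast x^\ast=x^\ast\) is essentially the paper's own proof: you apply Proposition \ref{translocal} to \({\mathcal R}^\ast\) and \(T^\ast\), form the same multivalued map \(\Phi(x^\ast)=r\,{\rm ball}({\mathcal R}^\ast)(T^\ast x^\ast)\cap B\), check that its values are nonempty, convex and weak-\(\ast\) compact via the weak-\(\ast\) operator compactness of \({\rm ball}({\mathcal R}^\ast)\), and prove upper semicontinuity by the identical net argument, using strong compactness of \({\rm ball}({\mathcal R})\) to pass to a strongly convergent subnet so that \(TR_\alpha x\to TRx\) in norm and the pairing with the bounded weak-\(\ast\) convergent net \((x^\ast_\alpha)\) passes to the limit. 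The paper's proof stops at the fixed-point equation and does not argue the ``moreover'' clause at all.

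Where you go beyond the paper---deducing a nontrivial invariant subspace for \(T^\ast\)---there is a genuine gap, which you partly acknowledge but do not close. The eigenspace \(M=\ker(R^\ast T^\ast-I)\) settles the matter unless \(R^\ast T^\ast=I\). In that case, however, \(T\in{\mathcal R}^\prime\) forces \(T^\ast R^\ast=R^\ast T^\ast=I\), so \(T^\ast\) is outright invertible; hence \({\rm ran}(T^\ast)=E^\ast\) and your fallback subspace is not proper. Your ``either a proper invariant subspace or \(T^\ast\) invertible'' dichotomy therefore collapses entirely to the invertible case, and invertibility alone yields no invariant subspace, so this case still requires a separate idea. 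The survey itself offers none, deferring the ``moreover'' statement to \cite{LRT}; to make your final step complete you would need to import the corresponding argument from that reference or find another way to exclude (or handle) \(R^\ast T^\ast=I\).
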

\begin{proof}
We shall apply Proposition \ref{translocal} to the algebra \({\mathcal R}^\ast.\) Let \(B^\ast \subseteq E^\ast\) be a closed  ball as in the definition of a localizing algebra,
let \(r>0\)  be a positive number as in Proposition \ref{translocal},  and define a multivalued map \(\Phi:B^\ast \to {\mathcal P}(B^\ast)\) by the expression 
\[
\Phi(x^\ast)=r  \,{\rm ball}({\mathcal R}^\ast)(T^\ast x^\ast) \cap B^\ast.
\]
Then,  \(\Phi(x^\ast)\) is a non empty, convex subset of \(B^\ast.\) Also, \(\Phi(x^\ast)\) is weak-\(\ast\) closed because \({\rm ball}({\mathcal R}^\ast)(T^\ast x^\ast) \) is weak-\(\ast\) compact as the image of  \({\rm ball}({\mathcal R}^\ast)\) under the map \(R^\ast \to R^\ast T^\ast  x^\ast,\) which is continuous from \({\mathcal B}(E^\ast)\)  with the weak-\(\ast\) operator topology into \(E^\ast\)  with  the weak-\(\ast\)  topology, and  \({\rm ball}({\mathcal R}^\ast)\) is compact in the weak-\(\ast\) operator topology. 

We claim that \(\Phi\) is upper semicontinuous for the weak-\(\ast\) topology. Indeed, let \(x^\ast,y^\ast \in B^\ast,\) and let \((x^\ast_\alpha)\) and \((y^\ast_\alpha)\) be two nets in \(B^\ast\)  with \(x^\ast_\alpha \to x^\ast,\)  \(y^\ast_\alpha \to y^\ast\) in the weak-\(\ast\) topology, and such that  \(y^\ast_\alpha \in \Phi(x^\ast_\alpha).\) We must show that \(y^\ast \in \Phi(x^\ast).\) Since \(y^\ast_\alpha \in \Phi(x^\ast_\alpha),\) there is an \(R^\ast_\alpha \in  \,{\rm ball}({\mathcal R}^\ast)\) such that \(y^\ast_\alpha=r R^\ast_\alpha T^\ast  x^\ast_\alpha.\) Since \({\rm ball}({\mathcal R})\) is precompact in the strong operator topology, there exists a subnet \((R_{\alpha_\beta})\) that converges in the strong operator topology to some \(R \in {\mathcal B}(E).\) Thus, \(R^\ast_{\alpha_\beta} \to R^\ast\) in the weak-\(\ast\) operator topology. Notice that \({\rm ball}({\mathcal R}^\ast)\) is compact in this topology because  \({\rm ball}({\mathcal B}(E^\ast))\) is compact in this topology and \({\mathcal R}^\ast\) is closed in this topology. It follows that \(R^\ast \in {\rm ball}({\mathcal R}^\ast).\) Let \(x \in E\) and notice that \(\|TR_{\alpha_\beta}x-TRx\| \to 0.\) Then
\begin{eqnarray*}
\langle x, y^\ast_{\alpha_\beta} \rangle & = & \langle x, r R^\ast_{\alpha_\beta}  T^\ast x^\ast_{\alpha_\beta} \rangle\\
& = & r \langle TR_{\alpha_\beta} x,  x^\ast_{\alpha_\beta} \rangle \\ 
& = & r \langle TR_{\alpha_\beta} x-TRx,  x^\ast_{\alpha_\beta} \rangle + r \langle TR x,  x^\ast_{\alpha_\beta} \rangle.
\end{eqnarray*}
We have  \(\langle TR_{\alpha_\beta} x-TRx,  x^\ast_{\alpha_\beta} \rangle \to 0\) and \(\langle TR x,  x^\ast_{\alpha_\beta} \rangle \to \langle TRx,x^\ast \rangle = \langle x,R^\ast T^\ast x^\ast \rangle,\) so that 
\[
\langle x, y^\ast_{\alpha_\beta} \rangle \to \langle x, r R^\ast T^\ast x^\ast \rangle.
\] 
Since \(x \in E\) is arbitrary, \(y^\ast_{\alpha_\beta} \to r R^\ast T^\ast x^\ast\) in the weak-\(\ast\) topology, and it follows that \(y^\ast = r R^\ast T^\ast x^\ast.\) This shows that \(y^\ast \in \Phi(x^\ast),\) and the proof of our claim is complete. 

Since the map \(\Phi\) is upper semicontinuous and \(B^\ast\) is compact in the weak-\(\ast\) topology, it follows from the Ky Fan fixed point theorem that there is a vector \(x^\ast \in B^\ast\) such that \(x^\ast \in \Phi(x^\ast),\) that is, there is an operator \(R \in {\rm ball}({\mathcal  R})\) such that \(x^\ast=r R^\ast T^\ast x^\ast.\) 
\end{proof}
\begin{corollary}\cite[Corollary 2.4]{LRT}
\label{sclocal}
Let \(T \in {\mathcal B}(E)\) be an operator such that \(\{T\}^\prime\) is a strongly compact algebra and \(\{T^\ast\}^\prime\) is a localizing algebra. Then \(T^\ast\) has a nontrivial invariant subspace.
\end{corollary}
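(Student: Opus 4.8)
The plan is to realize this corollary as a direct instance of Theorem \ref{multi}, taking \({\mathcal R}=\{T\}^\prime\). With this choice, the assumption that \(\{T\}^\prime\) is strongly compact is exactly the hypothesis on \({\mathcal R}\); since every operator commutes with each member of its own commutant, \(T\in\{T\}^{\prime\prime}={\mathcal R}^\prime\); and if we take the operator appearing in Theorem \ref{multi} to be \(T\) itself, its conclusion that \(T^\ast\) has a non trivial invariant subspace is precisely the assertion to be proved. We may assume \(T\neq 0\), for otherwise \(T^\ast=0\) and any non trivial subspace of the infinite dimensional dual \(E^\ast\) is invariant under it.

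The first step is to dispose of the transitivity requirement of Theorem \ref{multi} by a dichotomy on the algebra \({\mathcal R}^\ast=(\{T\}^\prime)^\ast\) that is actually fed into it. Note that \((\{T\}^\prime)^\ast\subseteq\{T^\ast\}^\prime\), because \(RT=TR\) forces \(T^\ast R^\ast=R^\ast T^\ast\). If \({\mathcal R}^\ast\) is not transitive, then it leaves some non trivial subspace \(M\subseteq E^\ast\) invariant; since \(T\in\{T\}^\prime\) gives \(T^\ast\in{\mathcal R}^\ast\), the subspace \(M\) is invariant under \(T^\ast\) and we are done with no recourse to a fixed point theorem. Hence we may assume that \({\mathcal R}^\ast\) is transitive.

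It then remains to check the two further hypotheses of Theorem \ref{multi} for \({\mathcal R}^\ast\): closedness in the weak-\(\ast\) operator topology and the localizing property. Closedness should be routine, since a commutant is closed in the weak operator topology and the separate continuity of multiplication transfers this to closedness of the adjoint algebra in the weak-\(\ast\) operator topology, in the same spirit as the compactness argument for \({\rm ball}({\mathcal R}^\ast)\) inside the proof of Theorem \ref{multi}. Granting the localizing property as well, Theorem \ref{multi} yields \(R\in{\mathcal R}\) and a non zero \(x^\ast\in E^\ast\) with \(R^\ast T^\ast x^\ast=x^\ast\), and with it the non trivial invariant subspace for \(T^\ast\).

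The hard part will be the localizing hypothesis. The datum we are handed is that the larger algebra \(\{T^\ast\}^\prime\) is localizing, whereas Theorem \ref{multi} must be applied to the possibly smaller algebra \((\{T\}^\prime)^\ast\); and the localizing property passes from a subalgebra to every algebra that contains it, that is, in the opposite direction to the one we need. So the crux is to guarantee that the norm-bounded operators witnessing the localizing property can be selected inside \((\{T\}^\prime)^\ast\) and not merely inside \(\{T^\ast\}^\prime\). I expect this to be the only nonroutine point, and the place where the strong compactness of \(\{T\}^\prime\) must genuinely enter in order to reconcile \((\{T\}^\prime)^\ast\) with \(\{T^\ast\}^\prime\); the remainder is a verbatim appeal to Theorem \ref{multi}.
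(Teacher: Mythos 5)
Your overall strategy is exactly the paper's: the proof given there is a two-line dichotomy --- if \(T^\ast\) has a hyperinvariant subspace there is nothing to prove (such a subspace is in particular invariant under \(T^\ast\) itself), and otherwise \(\{T^\ast\}^\prime\) is transitive and Theorem \ref{multi} is invoked with \({\mathcal R}=\{T\}^\prime\). Your version of the dichotomy, run on \((\{T\}^\prime)^\ast\) instead of \(\{T^\ast\}^\prime\), works equally well for the ``done'' branch, since \(T^\ast\) belongs to both algebras.

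The difficulty is that your argument stops precisely where you declare it hardest: you never establish that \((\{T\}^\prime)^\ast\) is localizing, nor that it is closed in the weak-\(\ast\) operator topology, so as written this is a proof sketch with an admitted gap rather than a proof. Your diagnosis of that gap is accurate: the localizing property passes upward from a subalgebra to any algebra containing it, whereas here one is handed it for the larger algebra \(\{T^\ast\}^\prime\) and needs it for the possibly smaller \((\{T\}^\prime)^\ast\), and strong compactness of \(\{T\}^\prime\) is not obviously the tool that bridges this. For what it is worth, the paper does not close this gap either: its proof simply asserts that Theorem \ref{multi} applies, i.e.\ it reads the hypotheses of the corollary as supplying verbatim the hypotheses of the theorem, tacitly identifying \((\{T\}^\prime)^\ast\) with \(\{T^\ast\}^\prime\). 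On a reflexive space --- in particular in the Hilbert-space setting of the motivating example of a compact operator with dense range --- every operator on \(E^\ast\) is an adjoint and the two algebras coincide, so the issue evaporates; in general one should either read the localizing and weak-\(\ast\) closedness hypotheses as being imposed on \((\{T\}^\prime)^\ast\) directly, or restrict to the reflexive case. In short: right skeleton, correctly identified crux, but the step you yourself single out as the crux is left unproved.
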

\begin{proof}
If \(T^\ast\) has a hyperinvariant subspace then there is nothing to prove, and otherwise \(\{T^\ast\}^\prime\) is a transitive algebra, so that Theorem \ref{multi} applies.
\end{proof}
\noindent
Notice that the assumptions of Corollary \ref{sclocal} are met whenever \(T\) is a compact operator with dense range. 

\section{An infinite dimensional version of Burnside's theorem}
\label{burnside}
\noindent
Burnside's classical theorem is the assertion that for  a finite dimensional linear space  \(F,\)  the only transitive subalgebra of  \( {\mathcal B}(F)\) is the whole algebra \( {\mathcal B}(F).\)  Lomonosov \cite{lomonosov91} obtained a generalization of Burnside's theorem to infinite dimensional Banach spaces. Scott Brown \cite{brown} proved the same result independently for  the special case of a Hilbert space and a commutative algebra. Lindstr\"om and Schl\"uchtermann \cite{LS} provided a  relatively short proof of the Lomonosov result in full generality. In this section we present a proof of the Scott Brown result that is based on Schauder fixed point theorem. 

Let \(H\) be a complex, infinite dimensional, separable Hilbert  space. Let \(T \in {\mathcal B}(H)\) and let \(\|T\|_e\) denote the essential norm of \(T,\) that is, the distance from \(T\) to the space of compact operators.

\begin{theorem} \cite[Theorem 1.1]{brown}
\label{burnthm}
Let \({\mathcal R}\)  be a commutative subalgebra of \({\mathcal B}(H)\). Then there exist  nonzero vectors \(x, y \in  H\) such that for any \(R \in {\mathcal R}\) we have \( |\langle Rx , y \rangle | \leq \|R\|_e.\)
\end{theorem}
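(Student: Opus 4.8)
The plan is to argue by contradiction and to transplant, into the essential-norm setting, the covering-plus-partition-of-unity construction behind Theorem \ref{main}, feeding the resulting map to the Schauder fixed point theorem in the form of Proposition \ref{key}. Suppose the conclusion fails. Since both sides of the desired inequality are homogeneous of degree one in $R$, I may work with the essential unit ball $\mathcal{R}_1=\{R\in\mathcal{R}:\|R\|_e\le 1\}$, so that the failure reads: for every pair of nonzero vectors $x,y\in H$ there is an $R\in\mathcal{R}_1$ with $|\langle Rx,y\rangle|>1$. As in Theorem \ref{main} I would fix $x_0$ with $\|x_0\|$ large and work over the closed ball $B=\{x:\|x-x_0\|\le 1\}$, which avoids the origin; the open sets attached to the operators $R\in\mathcal{R}_1$ on which the quantity $|\langle Rx,y\rangle|$ exceeds $1$ should then cover the relevant configurations of vectors, and one extracts a finite subfamily $R_1,\dots,R_n$.

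The essential norm enters at the compactness step, and this is exactly where the argument must depart from Theorem \ref{main}. There the operator $K$ was genuinely compact, so $\overline{KB}$ was compact and the associated map had relatively compact image; here an operator of essential norm at most one need \emph{not} have relatively compact image, so Proposition \ref{key} cannot be applied to a map built from the $R_i$ directly. The remedy I would use is to split each $R_i=C_i+D_i$ with $C_i$ compact and $\|D_i\|$ as close as we please to $\|R_i\|_e\le 1$, and to build the continuous self-map from the \emph{compact} parts, \[ \Phi(x)=\sum_{i=1}^n\beta_i(x)\,C_i x, \] where $\beta_1,\dots,\beta_n$ is a partition of unity subordinate to the finite cover. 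Since each $C_i$ is compact, Mazur's theorem makes $\Phi(B)$ relatively compact, and after arranging $\Phi(B)\subseteq B$ exactly as in Theorem \ref{main}, Proposition \ref{key} produces a fixed point $x_1=\Phi(x_1)\ne 0$.

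The fixed point gives $\sum_i\beta_i(x_1)C_i x_1=x_1$, and writing $R=\sum_i\beta_i(x_1)R_i\in\mathcal{R}$ (a legitimate convex combination, as $\mathcal{R}$ is a linear subspace) we obtain $Rx_1=x_1+D x_1$ with $D=\sum_i\beta_i(x_1)D_i$ of norm close to one and $\|R\|_e\le 1$. Testing against a suitable vector $y$ extracted from the construction then produces a pair realizing $|\langle Rx_1,y\rangle|>\|R\|_e$ only up to an error coming from the approximation $R_i\approx C_i$. To close the argument I would let the approximation parameter tend to zero and extract weak limits of the resulting vectors, using that $H$ is separable so that bounded sequences have weakly convergent subsequences; it is in organizing this limit that the commutativity of $\mathcal{R}$ does its work, keeping the approximating data coherent so that the limiting pair $(x,y)$ is genuinely nonzero and contradicts the assumed failure. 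This is also the point at which the general (noncommutative) statement becomes harder and forces Lomonosov's more elaborate argument.

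The main obstacle is the reconciliation of the non-compact remainder with the essential norm. Replacing each $R_i$ by its compact part $C_i$ introduces an error governed by $\|D_i\|\approx\|R_i\|_e\le 1$, and the whole mechanism succeeds only because this error is precisely what the right-hand side $\|R\|_e$ is permitted to absorb; making this absorption quantitative and uniform over the finite family, and then upgrading the resulting approximate inequality to the exact inequality through the weak-limit argument above, is the delicate heart of the proof. Everything else is a faithful rerun of the Schauder-based scheme of Theorem \ref{main}, but this single substitution — compact parts in place of a genuinely compact operator — is what I expect to be the real difficulty.
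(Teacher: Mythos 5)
Your plan diverges from the paper's proof at the two points where the real work happens, and at both points the step you propose would fail. First, the finite subcover: in Theorem \ref{main} the finite subfamily is extracted from an open cover of the norm-compact set $\overline{KB}$, but here there is no single compact operator and hence no norm-compact set to cover; your sketch never identifies what compactness produces the finite family $R_1,\dots,R_n$. The paper's solution is to change topology: it covers the \emph{weakly} compact ball $B$ by weakly open sets $U_x$ on which $R_x$ maps into $B$, these neighborhoods being manufactured from the weak-to-norm continuity of the compact part of $R_x$ on bounded sets together with the smallness of the non-compact part; the fixed point theorem is then Schauder--Tychonoff applied to the weakly continuous map $\Phi(x)=\sum_j f_j(x)R_{x_j}x$ on the weakly compact convex set $B$, not Proposition \ref{key} in the norm topology.

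Second, and more seriously, your substitution of the compact parts $C_i$ for the $R_i$ discards remainders $D_i$ of norm close to $\|R_i\|_e\le 1$; applied to vectors of $B$ this is an error of order one, not one that tends to zero, so neither $\Phi(B)\subseteq B$ nor the passage from $\sum_i\beta_i(x_1)C_ix_1=x_1$ to a statement about $R=\sum_i\beta_i(x_1)R_i$ survives, and the concluding ``let the approximation parameter tend to zero and extract weak limits'' is not an argument. The paper sidesteps this by using the homogeneity you noted, but with a small constant: it works with $\mathcal{E}=\{R\in\mathcal{R}:\|R\|_e\le 1/16\}$ (assuming, for contradiction, that $\mathcal{E}x$ is dense for every $x\ne 0$), so each $R_x=T_x+K_x$ can be taken with $\|T_x\|\le 1/8$, and the non-compact contribution ($<1/4$ on $B$) is absorbed by the slack between $\|R_xx-x_0\|<1/2$ and the radius $1$ of $B$. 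The map $\Phi$ is built from the \emph{full} operators $R_{x_j}$, the fixed point is an exact eigenvector $Ry_0=y_0$ with $R\in\mathcal{R}$ and $R\ne I$ (since $\|R\|_e$ is small while $\|I\|_e=1$), and commutativity enters only at the very end: it makes the eigenspace $\ker(R-I)$ a nontrivial subspace invariant under $\mathcal{R}$, whose nonzero vectors violate the assumed density. You would need to restructure around these three points --- weak topology for the cover and the fixed point, a small essential-norm normalization in place of $\|R\|_e\le 1$, and the eigenspace endgame --- before the argument could close.
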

\begin{proof}
Consider the set \({\mathcal E} = \{R \in {\mathcal R}: \|R\|_e \leq 1/16\}.\)  We claim that there is some \(x \in H \backslash \{0\}\) such that the set  \({\mathcal E}x\) is not dense in \(H.\) The result then follows easily  because in that case there is some \(y \in H \backslash \{0\}\) such that   \( |\langle Rx , y \rangle | \leq 1\) for all \(R \in {\mathcal E}.\)
Now, for the proof of our claim, we proceed by contradiction. Suppose that  the set  \({\mathcal E}x\) is  dense in \(H\) for every  \(x \in H \backslash \{0\}.\)  Choose \(x_0 \in H\) with \(\|x_0\|=2\) and consider the closed ball \(B=\{x \in H: \|x-x_0\| \leq 1\}.\) Then, for every  vector \(x \in B\) there is an operator  \(R_x \in {\mathcal E}\) such that \(\|R_xx -x_0\| < 1/2.\) Next, there is a bounded operator \(T_x\) and a compact operator \(K_x\) such that \(R_x=T_x+K_x\) and \(\|T_x\| \leq 1/8.\) Since \(K_x\) is a compact operator, it is weak to norm continuous on bounded sets, so that there exists an open neighborhood of \(x\) in the weak topology, say  \(V_x \subseteq H,\) such that \(\|K_xy-K_xx\| < 1/4\) for all \(y \in V_x\ \cap B.\) Then consider the set \(U_x=V_x \cap B\) and notice that \(U_x\) is an open neighborhood of \(x\) in the weak topology relative to \(B.\) Moreover, for \(y \in U_x\) we have
\[
\|R_xy-R_xx\| \leq \|T_xy-T_xx\| + \|K_xy-K_xx\| < 2 \cdot \frac{1}{8} + \frac{1}{4} = \frac{1}{2},
\]
and therefore \(\|R_xy-x_0\| < 1.\) Hence, \(R_x U_x \subseteq B.\)  Since \(B\) is compact in the weak topology, there exist finitely many vectors \(x_1, \ldots x_n \in B\) such that
\[
B \subseteq \bigcup_{j=1}^n U_{x_j}.
\]
Choose some weakly continuous functions \(f_1, \ldots ,f_n\) on \(B\) such that \({\rm supp}( f_j) \subseteq U_j,\) \(0 \leq f_j(x) \leq 1,\)  and
\[
\sum_{j=1}^n f_j(x)=1 \qquad \text{for all } x \in B.
\]
Define a weakly continuous mapping  \(\Phi:B \to B\) by the expression
\[
\Phi(x)= \sum_{j=1}^n f_j(x)R_{x_j}x \qquad \text{for all } x \in B,
\]
and apply the Schauder fixed point theorem to find a vector \(y_0 \in B\) such that \(\Phi(y_0)=y_0.\) Finally, consider the operator \(R \in {\mathcal R}\) defined by the expression
\[
R= \sum_{j=1}^n f_j(y_0)R_{x_j}.
\]
Hence, \(Ry_0=y_0.\) Notice that \(R \neq I\) because \(\|R\|_e \leq 1/8.\) Then, the eigenspace \(M= \{x \in H: Rx=x\}\) is a closed nontrivial invariant subspace for the algebra \({\mathcal R}.\)  Thus, any vector \(x \in M\) has the property that the set \({\mathcal E}x\) is not dense in \(H.\) The contradiction has arrived.
\end{proof}
\section{Invariant subspaces for operators on Krein space}
\label{krein}
Let \(H_1,H_2\) be two Hilbert spaces and consider the orthogonal direct sum \(H=H_1 \oplus H_2.\) Let \(P_1,P_2\) denote the orthogonal projections from \(H\) onto \(H_1,H_2,\) respectively. Consider  the operator \(J:=P_1-P_2.\) The {\em Krein space} is the space \(H\) provided with the indefinite product
\[
[x,y]:= \langle Jx, y \rangle, \qquad x,y \in H.
\]
Notice that  \(J\) is a selfadjoint involution, that is, \(J^\ast = J\) and \(J^2=I.\) The operator \(J\) is sometimes called the {\em fundamental symmetry} of the Krein space.

A vector \(x \in H\) is said to be {\em non negative} provided that \([x,x] \geq 0,\) and a subspace \(M \subseteq H\) is said to be {\em non negative} provided that  \([x,x] \geq 0\) for all \(x \in M.\)

Every operator \(T \in{\mathcal B}(H)\) has a matrix representation
\[
T= \left [ \begin{array}{cc}
		T_{11} & T_{12}\\
		T_{21} & T_{22}
		\end{array}
		\right ]
\]
with respect to the decomposition  \(H=H_1 \oplus H_2.\)

There is a natural, one to one and onto correspondence between the maximal non negative invariant subspaces \(M\) of an operator  \(T \in{\mathcal B}(H)\) and the contractive solutions \(X \in  {\mathcal B}(H_1, H_2)\) of the so called {\em operator Riccati equation}
\[
XT_{12}X + XT_{11}- T_{22}X -T_{21}=0.
\]
The correspondence \( X \leftrightarrow M\) is given by \(M= \{ x_1 \oplus X x_1: x_1 \in H_1\},\) where \(\|X\| \leq 1.\) The operator \(T\) is usually called the {\em hamiltonian operator} of the operator Ricatti  equation. 

An operator \(T \in{\mathcal B}(H)\) is said to be \(J\)-{\em selfadjoint} provided that \([Tx,y]=[x,Ty]\) for every \(x,y \in H.\) This is equivalent to say that \(JT=T^\ast J,\) or in other words, \(T_{11}^\ast =T_{11},\) \(T_{22}^\ast =T_{22},\) and \(T_{12}^\ast =-T_{21}.\)

A classical theorem of Krein is the assertion that, if the hamiltonian operator \(T\) is \(J\)-selfadjoint and the corner operator \(T_{12}\) is compact, then there exists a maximal non negative invariant subspace for \(T.\)

Albeverio, Makarov, and Motovilov \cite{AMM} addressed the question of the existence and uniqueness of contractive solutions to the operator Riccati equation under the condition that the diagonal entries in the hamiltonian operator have disjoint spectra, that is, \(\sigma(T_{11}) \cap \sigma(T_{22})= \emptyset.\) They proved the following

\begin{theorem}\cite[Theorem 3.6 and Lemma 3.11]{AMM}
\label{universal}
There is some universal constant \(c > 0\) such that whenever the corner operator \(T_{12}\) satisfies the condition
\[
\|T_{12}\| < c \cdot {\rm dist} [\sigma(T_{11}), \sigma(T_{22})],
\]
there is a unique solution \(X\) to the operator Riccati equation with \(\|X\| \leq 1.\)
\end{theorem}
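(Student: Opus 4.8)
The plan is to recast the operator Riccati equation as a fixed point equation for a contraction on the closed unit ball of \({\mathcal B}(H_1,H_2)\) and then invoke the Banach fixed point theorem. Write \(\delta = {\rm dist}[\sigma(T_{11}),\sigma(T_{22})]\) and introduce the Sylvester operator \({\mathcal S}\colon {\mathcal B}(H_1,H_2) \to {\mathcal B}(H_1,H_2)\) defined by \({\mathcal S}(X) = T_{22}X - XT_{11}\). Then the Riccati equation \(XT_{12}X + XT_{11} - T_{22}X - T_{21} = 0\) is equivalent to \({\mathcal S}(X) = XT_{12}X - T_{21}\).

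First I would establish that \({\mathcal S}\) is invertible with a controlled norm. Since the diagonal entries have disjoint spectra, the classical theorem of Rosenblum guarantees that \({\mathcal S}\) is a bounded invertible operator on \({\mathcal B}(H_1,H_2)\). The crucial quantitative input, and the main obstacle, is an estimate of the form \(\|{\mathcal S}^{-1}\| \leq \kappa/\delta\) with a \emph{universal} constant \(\kappa\); this is precisely the content of the norm bound for the inverse Sylvester operator (Lemma 3.11 in \cite{AMM}). In the relevant \(J\)-selfadjoint situation the diagonal entries \(T_{11},T_{22}\) are selfadjoint and this estimate holds with \(\kappa = 1\); moreover \(T_{21} = -T_{12}^\ast\) in that case, so that \(\|T_{21}\| = \|T_{12}\|\).

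With this in hand I would define the map \(\Phi(X) = {\mathcal S}^{-1}(XT_{12}X - T_{21})\) on the closed unit ball \(\mathbb{B} = \{X \in {\mathcal B}(H_1,H_2) : \|X\| \leq 1\}\), whose fixed points in \(\mathbb{B}\) are exactly the contractive solutions of the Riccati equation. For \(X \in \mathbb{B}\) one estimates
\[
\|\Phi(X)\| \leq \frac{\kappa}{\delta}\bigl(\|X\|^2\|T_{12}\| + \|T_{21}\|\bigr) \leq \frac{2\kappa\|T_{12}\|}{\delta},
\]
so that \(\Phi\) maps \(\mathbb{B}\) into itself as soon as \(\|T_{12}\| \leq \delta/(2\kappa)\). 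For the contraction estimate the algebraic identity \(XT_{12}X - YT_{12}Y = XT_{12}(X-Y) + (X-Y)T_{12}Y\) yields, for \(X,Y \in \mathbb{B}\),
\[
\|\Phi(X) - \Phi(Y)\| \leq \frac{\kappa}{\delta}\|T_{12}\|\bigl(\|X\| + \|Y\|\bigr)\|X-Y\| \leq \frac{2\kappa\|T_{12}\|}{\delta}\,\|X-Y\|.
\]

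Finally, setting \(c = 1/(2\kappa)\), the hypothesis \(\|T_{12}\| < c\,\delta\) forces the Lipschitz constant \(2\kappa\|T_{12}\|/\delta\) to be strictly less than \(1\), so that \(\Phi\) is a contraction of the complete metric space \(\mathbb{B}\) into itself. The Banach fixed point theorem then produces a unique \(X \in \mathbb{B}\) with \(\Phi(X) = X\), which is the unique contractive solution of the operator Riccati equation. The genuinely hard step is the universal norm bound on \({\mathcal S}^{-1}\); once \(\kappa\) is available the self-map and contraction estimates above are routine and the conclusion is immediate.
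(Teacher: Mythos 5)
Your proposal is correct and follows essentially the same route as the paper: the Riccati equation is recast as a fixed point problem for \(X \mapsto {\mathcal S}^{-1}(XT_{12}X - T_{21})\) on the closed unit ball, the self-map and contraction estimates use \(\|T_{21}\| = \|T_{12}\|\) together with a bound \(\|{\mathcal S}^{-1}\| \leq \kappa/\delta\), and the Banach fixed point theorem finishes the argument (the paper takes \(\kappa = \pi/2\) from Theorem \ref{inverse}, yielding \(c = 1/\pi\)). The only quibble is your parenthetical claim that \(\kappa = 1\) suffices for selfadjoint diagonal entries --- that constant is only known under extra separation hypotheses (e.g.\ subordinated spectra), and the paper uses the general constant \(\pi/2\) instead --- but since the statement only requires \emph{some} universal \(\kappa\), this does not affect the validity of your argument.
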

\noindent
An earlier result in this direction was given by Motovilov \cite[Corollary 1]{motovilov} with the stronger assumption that the corner operator \(T_{12}\) is Hilbert-Schmidt. Adamjan, Langer, and Tretter \cite{ALT} extended  the technique to the case that the hamiltonian operator is not \(J\)-selfadjoint. Kostrykin, Makarov, and Motovilov \cite{KMM} adopted the assumption that \(\sigma(T_{11})\)  lies in a gap of \(\sigma(T_{22})\) and they showed that the best constant, in that context,  is \(c = \sqrt{2}.\)

We present a proof of Theorem \ref{riccati} that is based on Banach fixed point theorem. This method  can be found in the paper of Albeverio, Motovilov and Shkalikov \cite[Theorem 4.1]{AMS}. A basic tool is the bounded linear operator \(R\) defined for \(X \in {\mathcal B}(H_1,H_2)\) by the expression
\[
R(X):=T_{22}X-XT_{11}.
\]
It follows from the Rosenblum theorem that the map \(R\) is invertible. The main result is the following
\begin{theorem}\cite[Theorem 4.1]{AMS}
\label{riccati}
If the operators \(T_{11}, T_{22}\) have disjoint spectra and the corner operator \(T_{12}\) satisfies the estimate
\[
\|T_{12}\| < \frac{1}{2 \|R^{-1}\|}
\]
then there is a unique solution \(X\) to the operator Riccati equation with \(\|X\| \leq 1.\)
\end{theorem}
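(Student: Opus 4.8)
The plan is to recast the operator Riccati equation as a fixed point equation for a contraction on the closed unit ball of $\mathcal{B}(H_1,H_2)$ and then invoke the Banach fixed point theorem. First I would rewrite the equation: grouping the two linear terms, the Riccati equation $XT_{12}X+XT_{11}-T_{22}X-T_{21}=0$ is equivalent to $T_{22}X-XT_{11}=XT_{12}X-T_{21}$, that is, $R(X)=XT_{12}X-T_{21}$. Since the Rosenblum theorem guarantees that $R$ is invertible, this is in turn equivalent to the fixed point equation $X=F(X)$, where I set
\[
F(X):=R^{-1}\bigl(XT_{12}X-T_{21}\bigr).
\]
Thus a contractive solution of the Riccati equation is exactly a fixed point of $F$ lying in the closed unit ball $\mathcal{D}=\{X\in\mathcal{B}(H_1,H_2):\|X\|\le 1\}$, which is a complete metric space as a closed subset of a Banach space.

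Next I would check that $F$ maps $\mathcal{D}$ into itself. For $\|X\|\le 1$ one estimates
\[
\|F(X)\|\le\|R^{-1}\|\bigl(\|X\|^2\|T_{12}\|+\|T_{21}\|\bigr)\le\|R^{-1}\|\bigl(\|T_{12}\|+\|T_{21}\|\bigr).
\]
In the $J$-selfadjoint setting one has $T_{12}^\ast=-T_{21}$ and hence $\|T_{21}\|=\|T_{12}\|$, so the right hand side is at most $2\|R^{-1}\|\,\|T_{12}\|$, which is strictly less than $1$ by the hypothesis $\|T_{12}\|<1/(2\|R^{-1}\|)$. Therefore $\|F(X)\|\le 1$ and $F(\mathcal{D})\subseteq\mathcal{D}$.

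The heart of the argument is the contraction estimate, and here the only subtlety is the quadratic nonlinearity $XT_{12}X$. I would linearize the difference through the telescoping identity
\[
XT_{12}X-YT_{12}Y=XT_{12}(X-Y)+(X-Y)T_{12}Y,
\]
valid for all $X,Y$. For $X,Y\in\mathcal{D}$ this yields $\|XT_{12}X-YT_{12}Y\|\le 2\|T_{12}\|\,\|X-Y\|$, and applying $R^{-1}$ gives
\[
\|F(X)-F(Y)\|\le 2\|R^{-1}\|\,\|T_{12}\|\,\|X-Y\|.
\]
The hypothesis makes the Lipschitz constant $q=2\|R^{-1}\|\,\|T_{12}\|<1$, so $F$ is a contraction on $\mathcal{D}$. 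Applying the Banach fixed point theorem then produces a unique $X\in\mathcal{D}$ with $F(X)=X$, which is the desired unique contractive solution.

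I expect the main obstacle to be the quadratic term: both the self-mapping and the contraction hinge on controlling $XT_{12}X$ on the unit ball, and it is precisely the telescoping identity together with the a priori bound $\|X\|\le 1$ that tames it and explains the constant $\tfrac12$ in the hypothesis. A secondary point worth stating explicitly is the estimate $\|T_{21}\|\le\|T_{12}\|$ used in the self-mapping step, which is where the $J$-selfadjointness of the hamiltonian operator enters.
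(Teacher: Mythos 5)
Your proposal is correct and follows essentially the same route as the paper: recast the Riccati equation as a fixed point problem for $R^{-1}\circ Q$ on the closed unit ball, verify the self-mapping and contraction estimates via the telescoping identity for the quadratic term, and apply the Banach fixed point theorem. Your only (welcome) addition is making explicit that $\|T_{21}\|=\|T_{12}\|$ via $J$-selfadjointness, a step the paper uses silently when it writes $\|T_{12}\|+\|T_{21}\|=2\|T_{12}\|$.
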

\noindent
The following upper bound on the norm of the inverse \(R^{-1}\) can be found in the work of Albeverio, Makarov, and Motovilov \cite[Theorem 2.7]{AMM}. See also the paper by Bhatia and Rosenthal [4, p.15] for this interesting result and other related issues.
\begin{theorem}\cite[Theorem 2.7] {AMM}
\label{inverse} 
If the operators \(T_{11},T_{22}\) have disjoint spectra then
\[
\| R^{-1}\| \leq \frac{\pi}{2} \cdot \frac{1}{{\rm dist}[\sigma(T_{11}),\sigma(T_{22})]}
\]
\end{theorem}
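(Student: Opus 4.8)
The plan is to reduce the operator bound to a scalar Fourier-analytic estimate by representing \(R^{-1}\) through the unitary groups generated by \(T_{11}\) and \(T_{22}\). Throughout I use that \(T_{11}\) and \(T_{22}\) are selfadjoint, as they are in the \(J\)-selfadjoint setting relevant here; consequently they admit spectral resolutions and the groups \(e^{itT_{11}}\) and \(e^{itT_{22}}\) are unitary for every \(t \in {\mathbb R}\). Write \(\delta = {\rm dist}[\sigma(T_{11}),\sigma(T_{22})] > 0.\) By replacing the scalar kernel below with its rescaling \(t \mapsto g(\delta t)\) at the very end, it suffices to treat the normalized case \(\delta = 1\) and to prove \(\|R^{-1}\| \le \pi/2.\)

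First I would produce a scalar kernel, namely a function \(g \in L^1({\mathbb R})\) such that
\[
\int_{-\infty}^{\infty} g(t)\, e^{i\lambda t}\, dt = \frac{1}{\lambda} \qquad \text{for all } |\lambda| \ge 1.
\]
Given such a \(g,\) I define a candidate inverse by
\[
S(Y) = \int_{-\infty}^{\infty} g(t)\, e^{itT_{22}}\, Y\, e^{-itT_{11}}\, dt, \qquad Y \in {\mathcal B}(H_1,H_2).
\]
To check that \(S = R^{-1},\) I would pass to the spectral representations of \(T_{11}\) and \(T_{22}\) and work in the associated iterated (double) operator integral: there \(e^{itT_{22}}(\cdot)\,e^{-itT_{11}}\) acts as multiplication by \(e^{i(\alpha-\beta)t}\) with \(\alpha \in \sigma(T_{22}),\) \(\beta \in \sigma(T_{11}),\) while \(R\) acts as multiplication by \(\alpha - \beta.\) Hence \(R \circ S\) carries the symbol \((\alpha-\beta)\int g(t)\,e^{i(\alpha-\beta)t}\,dt,\) and since the disjointness of the spectra forces \(|\alpha-\beta| \ge 1\) for every admissible pair, this symbol equals \((\alpha-\beta)\cdot(\alpha-\beta)^{-1} = 1.\) Thus \(R \circ S\) is the identity, and likewise \(S \circ R.\) Because each \(e^{itT_{jj}}\) is unitary we have \(\|e^{itT_{22}} Y e^{-itT_{11}}\| \le \|Y\|,\) and therefore
\[
\|R^{-1}\| = \|S\| \le \int_{-\infty}^{\infty} |g(t)|\, dt = \|g\|_1.
\]

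It therefore remains to construct \(g\) with \(\|g\|_1\) as small as possible, and this extremal step is where the constant \(\pi/2\) is forced and where the real difficulty lies. The naive identity \(\int_0^\infty \sin(\lambda t)\, dt = 1/\lambda,\) valid only in an Abel sense, corresponds to a bounded but non-integrable kernel, so it cannot be used directly; instead the values of the Fourier transform on the forbidden band \(|\lambda| < 1\) are free, and one must extend \(1/\lambda\) across \((-1,1)\) so that the resulting inverse transform is integrable with minimal \(L^1\) norm. I would exhibit an explicit odd extension of \(1/\lambda\) (following Bhatia--Davis--McIntosh, and as recorded by Bhatia--Rosenthal) and compute that the corresponding \(g\) satisfies \(\|g\|_1 = \pi/2.\) Verifying this norm identity, equivalently that the sharp value of this best-\(L^1\)-extension problem is exactly \(\pi/2,\) is the main obstacle: it is a genuine extremal computation rather than a routine estimate, and it is precisely the point at which the sharp constant enters.

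Finally I would undo the normalization. Replacing \(g(t)\) by \(g(\delta t)\) yields a kernel whose Fourier transform equals \(1/\lambda\) for all \(|\lambda| \ge \delta\) and whose \(L^1\) norm is \(\|g\|_1/\delta = \pi/(2\delta).\) Running the representation of the second paragraph with this kernel gives \(\|R^{-1}\| \le \pi/(2\delta),\) which is the asserted bound.
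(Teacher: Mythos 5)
The paper offers no proof of this statement at all: Theorem \ref{inverse} is quoted from Albeverio--Makarov--Motovilov \cite[Theorem 2.7]{AMM}, with a pointer to Bhatia--Rosenthal for the underlying estimate. Your outline is a faithful reconstruction of exactly that cited argument (Bhatia--Davis--McIntosh / Sz.-Nagy): represent the candidate inverse as \(S(Y)=\int_{-\infty}^{\infty} g(t)\,e^{itT_{22}}Ye^{-itT_{11}}\,dt\) with \(\hat g(\lambda)=1/\lambda\) on \(|\lambda|\ge\delta\), check \(R\circ S=S\circ R=\mathrm{id}\) via the spectral symbol \(\alpha-\beta\), bound \(\|S\|\le\|g\|_1\) by unitarity, and rescale. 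Two points in your favor: the scaling computation \(t\mapsto g(\delta t)\) is correct, and your explicit insertion of the hypothesis that \(T_{11},T_{22}\) are selfadjoint is not pedantry but essential --- the inequality is \emph{false} for general operators with merely disjoint spectra (already for \(2\times 2\) non-normal blocks one can keep \({\rm dist}[\sigma(T_{11}),\sigma(T_{22})]\) fixed while \(\|R^{-1}\|\to\infty\)), and it is the \(J\)-selfadjointness in the paper's Krein-space setting, where \(T_{11}^\ast=T_{11}\) and \(T_{22}^\ast=T_{22}\), that makes the statement meaningful. A minor caveat: the ``symbol'' verification that \(R\circ S=\mathrm{id}\) via double operator integrals needs a word of justification (bimeasure Fubini is delicate in general; here it is legitimate precisely because the multiplier \((\alpha-\beta)^{-1}=\int g(t)e^{it\alpha}e^{-it\beta}\,dt\) is in integral product form, or one can check the identity weakly on dense sets, or differentiate \(t\mapsto e^{itT_{22}}Ye^{-itT_{11}}\)); this is standard and fixable.

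The genuine gap is the one you yourself flag and then do not close: you never exhibit \(g\) nor verify \(\|g\|_1=\pi/2\), and that single fact carries the entire quantitative content of the theorem. Everything else in your argument proves only \(\|R^{-1}\|\le\|g\|_1/\delta\) for whatever admissible kernel one can construct, and easy constructions (smooth cutoffs of \(1/\lambda\), Poisson-type regularizations) give finite but strictly worse constants; the value \(\pi/2\) requires the specific Sz.-Nagy/Bhatia--Davis--McIntosh kernel and the computation of its \(L^1\) norm. Note also that you frame this as the extremal problem of \emph{minimizing} \(\|g\|_1\), but minimality is irrelevant to the stated upper bound: you need only the existence of \emph{one} \(g\in L^1(\mathbb{R})\) with \(\hat g(\lambda)=1/\lambda\) for \(|\lambda|\ge 1\) and \(\|g\|_1=\pi/2\) (sharpness of \(\pi/2\) is a separate, lower-bound statement). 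As written, then, your proposal is a correct and well-referenced roadmap --- essentially the proof behind the paper's citation --- but not a proof: the decisive construction-and-norm computation is deferred to the literature rather than carried out.
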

\noindent
Notice that Theorem \ref{universal} becomes a corollary of Theorem \ref{riccati} and Theorem \ref{inverse} with the  constant \(c=1/\pi.\)
\begin{proof}[Proof of Theorem \ref{riccati}.] Consider the quadratic map \(Q\) defined for \(X \in {\mathcal B}(H_1,H_2)\) by the expression
\[
Q(X):= XT_{12}X-T_{21}.
\]
It is clear that the operator Riccati equation can be expressed as
\[
Q(X)-R(X)=0,
\]
or equivalently, \(X=R^{-1}(Q(X)).\) Thus, the solutions of the operator Riccati equation are  the fixed points of the map \(S:=R^{-1} \circ Q.\) Now, let us check that the map \(S\) takes the unit ball of \({\mathcal B}(H_1,H_2)\) into itself. Indeed, if \(\|X\| \leq 1\) then
\begin{eqnarray*}
\|S(X)\| & = & \|R^{-1}(Q(X))\| \leq \|R^{-1}\| \cdot \|Q(X)\|\\
& \leq &  \|R^{-1}\| \cdot ( \|T_{12}\| \cdot \|X\|^2 + \|T_{21}\|)\\
& \leq & \|R^{-1}\| \cdot ( \|T_{12}\| + \|T_{21}\|) = 2 \|R^{-1}\| \cdot \|T_{12}\| <1.
\end{eqnarray*}
Also, the map \(S\) is contractive, for if \(\|X\|,\|Y\| \leq 1\) then
\begin{eqnarray*}
\|Q(X) - Q(Y)\| & = & \| XT_{12}X -Y T_{12}Y\|\\
& \leq & \| XT_{12}X -X T_{12}Y\| + \| XT_{12}Y -Y T_{12}Y\|\\
& \leq & ( \| X\| + \|Y \|) \cdot \|T_{12}\| \cdot \|X-Y\| \leq 2 \|T_{12}\| \cdot \|X-Y\|,
\end{eqnarray*}
and from this inequality it follows that
\begin{eqnarray*}
\|S(X)-S(Y)\| & = & \|R^{-1}(Q(X)-Q(Y))\|\\
& \leq & \|R^{-1}\| \cdot \|Q(X)-Q(Y)\| \leq 2 \|R^{-1}\| \cdot \|T_{12}\| \cdot \|X-Y\|,
\end{eqnarray*}
so that the map \(S\) satisfies a Lipschitz condition with a Lipschitz constant \(2 \|R^{-1}\| \cdot \|T_{12}\| < 1.\) The result now follows at once as a consequence of the  Banach fixed point theorem. 
\end{proof}

\bigskip

\section*{Competing interests}
The authors declare that they have no competing interests.

\section*{Author's contributions}
Both authors contributed equally and significantly in writing this article. Both authors read and approved the final manuscript.

\section*{Acknowledgements}
  \ifthenelse{\boolean{publ}}{\small}{}
This research was partially supported by Junta de Andaluc{\'\i}a under projects FQM-127 and FQM-3737, and by Ministerio de Educaci\'on, Cultura y Deporte under projects MTM2012-34847C02-01 and  MTM2009-08934. 

\newpage
{\ifthenelse{\boolean{publ}}{\footnotesize}{\small}
 \bibliographystyle{bmc_article}  
 \bibliography{espinola_lacruz} }     

\end{bmcformat}
\end{document}